\newtheorem{theorem}{Theorem}[section]
\newtheorem{lemma}[theorem]{Lemma}
\newtheorem{cor}[theorem]{Corollary}
\DeclareMathOperator{\E}{\mathbb{E}}
\DeclareMathOperator{\trace}{Tr}
\DeclareMathOperator{\range}{range}
\DeclareMathOperator{\spanof}{span}
\DeclareMathOperator{\vol}{vol}
\DeclareMathOperator{\disc}{disc}
\DeclareMathOperator{\vecdisc}{vecdisc}
\newcommand{\vect}[1]{\mathbf{#1}}
\newcommand{\psd}{\succeq} %
\newcommand{\junk}[1]{}
\begin{document}
\title[Koml\'{o}s Vector Discrepancy]{The Koml\'{o}s Conjecture Holds
  for Vector Colorings} 

\maketitle

\begin{abstract}
  The Koml\'{o}s conjecture in discrepancy theory states that for some  constant $K$ and for any $m \times n$ matrix $\vect{A}$ whose  columns lie in the unit ball there exists a vector $\vect{x} \in  \{-1, +1\}^n$ such that $\|\vect{Ax}\|_\infty \leq K$. This  conjecture also implies the Beck-Fiala conjecture on the discrepancy  of bounded degree hypergraphs. Here we prove a natural relaxation of  the Koml\'{o}s conjecture: if the columns of $\vect{A}$ are assigned  unit vectors in $\mathbb{R}^n$ rather than $\pm 1$ then the  Koml\'{o}s conjecture holds with $K=1$. Our result rules out the  possibility of a counterexample to the conjecture based on the  natural semidefinite relaxation of discrepancy. It also opens the  way to proving tighter efficient (polynomial-time computable) upper  bounds for the conjecture using semidefinite programming techniques.
\end{abstract}

\section{Introduction}

Let $\mathcal{H} = \{H_1, \ldots, H_m\}$ be a hypergraph with vertex
set $V = [n]$. In this work we study the \emph{combinatorial
  discrepancy} of hypergraphs \junk{--- a combinatorial quantity with
applications in geometry, computer science, and numerical integration,
among others ---} and related quantities. The discrepancy of
$\mathcal{H}$ is defined as
\begin{equation}
  \disc(\mathcal{H}) = \min_{\chi:[n] \rightarrow \{-1, +1\}} \max_{i =
    1}^m \left|\sum_{j \in H_i}{\chi(j)}\right|.
\end{equation}
Intuitively, discrepancy is the optimization problem of coloring the
vertices of a hypergraph, so that the most imbalanced edge is as
balanced as possible. Thus discrepancy is intimately connected to
problems in Ramsey theory that study conditions under which every
coloring leaves some edge monochromatic.  Discrepancy 
has applications in geometry, computer science, and numerical
integration, among others --- the books by
Matou\v{s}ek~\cite{matousek2010geometric}, Chazelle~\cite{Chazelle},
and the chapter by Beck and S\'{o}s~\cite{beck1996discrepancy} provide
references for a wide array of applications.

We will be particularly interested in the discrepancy of hypergraphs
with maximum degree bounded above by a parameter $t$, i.e.~hypergraphs
$\mathcal{H}$ all of whose vertices appear in at most $t$ edges. It is
a classical result of Beck and Fiala~\cite{beckfiala}  that for any
$\mathcal{H}$ of maximum degree at most $t$, $\disc(\mathcal{H}) \leq
2t-1$. Furthermore, they conjectured that $\disc(\mathcal{H}) \leq
C\sqrt{t}$ for an absolute constant $C$. Proving Beck and Fiala's
conjecture remains an elusive open problem in discrepancy theory.

As usual, we define the incidence matrix of $\mathcal{H}$ as an $m \times
n$ $0$-$1$ matrix $\vect{A}$ such that $A_{ij} = 1$ if and only if $j
\in H_i$. In matrix notation discrepancy can be defined as
$\disc(\mathcal{H}) = \min_{\vect{x} \in \{-1,
  1\}^n}{\|\vect{Ax}\|_\infty}$. This algebraic formulation allows us
to extend the definition of discrepancy to arbitrary matrices:
$\disc(\vect{A}) = \min_{\vect{x} \in \{-1,
  1\}^n}{\|\vect{Ax}\|_\infty}$. Interpreted in this way, discrepancy
is a vector balancing problem:  our goal is to assign signs to a given
set of $n$ vectors (the columns of $\vect{A}$), so that the signed sum
has small norm (infinity norm in our case). A natural restriction on
$\vect{A}$, analogous to the maximum degree restriction for
hypergraphs, is to bound the maximum of some norm of the columns of
$\vect{A}$. Such vector balancing problems were first considered in a
general form by B\'{a}r\'{a}ny and Grinberg~\cite{baranygrinberg},
although a similar problem was posed as early as 1963 by
Dworetzky. The proof of Beck and Fiala shows that for any $\vect{A}$
whose columns have $\ell_1$ norm at most $1$, $\disc(\vect{A}) \leq
2$. Koml\'{o}s conjectured\footnote{The earliest reference we can find
  is the 1987 book `Ten Lectures on the Probabilistic Method' by
  Spencer~\cite{tenlectures}} that for $\vect{A}$ whose columns have
$\ell_2$ norm at most $1$, $\disc(\vect{A}) \leq K$ for some absolute
constant $K$. The Koml\'{o}s conjecture implies the Beck-Fiala
conjecture and also remains open. The best partial progress towards proving
the Koml\'{o}s conjecture is a result by Banaszczyk~\cite{bana}, who
showed the bound $\disc(\vect{A}) \leq K\sqrt{\log n}$ for an absolute
constant $K$. This is the best known bound for the Beck-Fiala
conjecture as well. 

In this paper we are concerned with a natural convex relaxation of
discrepancy: \emph{vector discrepancy}. Vector discrepancy is defined
analogously to discrepancy, but we ``color'' $[n]$ with unit
$n$-dimensional vectors rather than $\pm 1$:
\begin{equation}
  \vecdisc(\vect{A}) = \min_{\vect{u_1}, \ldots, \vect{u_n} \in S^{n-1}} {\max_{i =
    1}^m{\left\|\sum_{j = 1}^n{A_{ij}\vect{u_j}}\right\|_2}},
\end{equation}
where $S^{n-1}$ is the unit sphere in $\mathbb{R}^n$. Vector
discrepancy is a relaxation of discrepancy, i.e. $\vecdisc(\vect{A})
\leq \disc(\vect{A})$ for all matrices $\vect{A}$: a coloring
$\vect{x}$ achieving $\disc(\vect{A})$ induces a vector coloring
$\{\vect{u_i} = x_i \vect{e_1}\}_{i = 1}^n$ ($\vect{e_i}$ being the
$i$-th standard basis vector) achieving vector discrepancy with the
same value. Vector discrepancy was used by Lov\'{a}sz to give an
alternative proof of Roth's lower bound on the discrepancy of
arithmetic progressions~\cite{lovasz2000integer}. A natural question
is whether a lower bound on vector discrepancy could disprove the
Koml\'{o}s conjecture. Our main result is a negative answer to this
question.
\begin{theorem}\label{thm:main}
  For any $m\times n$ real matrix $\vect{A}$ whose columns have
  $\ell_2$ norm at most $1$, $\vecdisc(\vect{A}) \leq 1$. 
\end{theorem}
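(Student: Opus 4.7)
The strategy is to recast vector discrepancy as a semidefinite program (SDP) and produce an explicit feasible Gram matrix. A choice of unit vectors $\vect{u_1},\dots,\vect{u_n}\in S^{n-1}$ is equivalent to specifying their Gram matrix $\vect{X}\in\mathbb{R}^{n\times n}$ (with $X_{jk}=\langle \vect{u_j},\vect{u_k}\rangle$), a PSD matrix with $X_{jj}=1$. Under this correspondence the constraint $\|\sum_j A_{ij}\vect{u_j}\|_2\le 1$ becomes $\vect{a_i}^T\vect{X}\vect{a_i}\le 1$, where $\vect{a_i}\in\mathbb{R}^n$ is the $i$-th row of $\vect{A}$. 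Thus the theorem reduces to exhibiting a PSD $\vect{X}$ with unit diagonal such that $\vect{A}\vect{X}\vect{A}^T$ has all diagonal entries at most $1$; a cleaner sufficient condition is the stronger $\vect{A}\vect{X}\vect{A}^T\preceq\vect{I}$.

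Setting $\vect{G}=\vect{A}^T\vect{A}$ and using that $\vect{A}\vect{X}\vect{A}^T$, $\vect{X}^{1/2}\vect{G}\vect{X}^{1/2}$ and $\vect{G}^{1/2}\vect{X}\vect{G}^{1/2}$ share their nonzero spectra, the condition $\vect{A}\vect{X}\vect{A}^T\preceq\vect{I}$ is equivalent to $\vect{G}^{1/2}\vect{X}\vect{G}^{1/2}\preceq\vect{I}$, and, when $\vect{G}$ is invertible, to $\vect{X}\preceq\vect{G}^{-1}$. The column-norm hypothesis is precisely $G_{jj}\le 1$, and Cauchy--Schwarz applied to the vectors $\vect{G}^{1/2}\vect{e_j}$ and $\vect{G}^{-1/2}\vect{e_j}$ yields $(\vect{G}^{-1})_{jj}\,G_{jj}\ge 1$, hence $(\vect{G}^{-1})_{jj}\ge 1$ for every $j$. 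This is exactly the numerical room needed to hope that some unit-diagonal PSD $\vect{X}$ fits beneath $\vect{G}^{-1}$. My plan is therefore to construct $\vect{X}=\vect{X}_R+\vect{X}_N$, where $\vect{X}_R\preceq\vect{G}^+$ is PSD on the range of $\vect{G}$ (which guarantees $\vect{A}\vect{X}_R\vect{A}^T\preceq\vect{I}$) and $\vect{X}_N$ is a PSD matrix supported on the null space of $\vect{G}$ (so $\vect{A}\vect{X}_N\vect{A}^T=0$), with the two blocks chosen jointly to achieve $X_{jj}=1$. An alternative route is SDP duality: show that the dual value $\max \trace(\vect{D})$, taken over probability weights $y$ on $[m]$ and diagonal matrices $\vect{D}\preceq\vect{A}^T\mathrm{diag}(y)\vect{A}$, is at most $1$, using that the diagonal of $\vect{A}^T\mathrm{diag}(y)\vect{A}$ is bounded by $1$ by the column-norm hypothesis.

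The hard part will be the diagonal-fitting step: one must realize $X_{jj}=1$ for every coordinate as $\alpha_j+\beta_j$ with $\alpha_j\in [0,(\vect{G}^+)_{jj}]$ contributed by $\vect{X}_R$ and $\beta_j\ge 0$ by $\vect{X}_N$, simultaneously for all $j$ via a single pair $(\vect{X}_R,\vect{X}_N)$. When $(\vect{G}^+)_{jj}\ge 1$ for every $j$ this amounts to a smaller SDP feasibility problem inside the range of $\vect{G}$; the awkward case is when $(\vect{G}^+)_{jj}<1$ for some $j$, which arises when $\vect{e_j}$ has a nontrivial component in the null space of $\vect{G}$, and one must use the null-space block carefully to make up the deficit. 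Verifying this feasibility with the tight constant $K=1$---likely through a convex-feasibility argument or a direct SDP-duality calculation that uses the column-norm hypothesis in a finer way than just bounding diagonals---is where I expect the technical core of the proof to lie.
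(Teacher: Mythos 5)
Your write-up correctly reformulates the theorem as a primal SDP feasibility question (find a unit-diagonal $\vect{X} \psd 0$ with $\mathrm{diag}(\vect{A}\vect{X}\vect{A^T}) \leq 1$), but it stops exactly where the theorem actually lives: the ``diagonal-fitting step'' you defer is not a technical verification, it \emph{is} the statement to be proved, and none of the hints you give can close it. The observation $(\vect{G}^{-1})_{jj}\,G_{jj}\geq 1$ is only an entrywise diagonal bound; the existence of a unit-diagonal PSD $\vect{X} \nsd \vect{G}^{-1}$ (or of the range/null-space decomposition $\vect{X}_R+\vect{X}_N$ you propose) depends on the global structure of $\vect{G}$, not on its diagonal, and you give no argument for it. The same problem defeats your suggested dual route as described: it is true that each diagonal entry of $\vect{A^T}\mathrm{diag}(y)\vect{A}$ is at most $1$, and hence each diagonal entry of a dominated diagonal matrix $\vect{D}$ is at most $1$, but that only yields $\trace(\vect{D}) \leq n$, while the theorem requires $\trace(\vect{D}) \leq 1$. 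Bounding diagonals entry by entry throws away exactly the coupling between coordinates that makes the constant $1$ possible. (A further, secondary risk: your ``cleaner sufficient condition'' $\vect{A}\vect{X}\vect{A^T} \nsd \vect{I}$ is strictly stronger than the diagonal condition the theorem needs, and you offer no evidence it is achievable with constant $1$.)

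The paper works entirely in the dual you mention, but extracts much more from the quadratic-form domination $\vect{W} \nsd \vect{A^T}\vect{P}\vect{A}$ than its diagonal. Restricting to the first $k$ coordinates (with $w_1 \geq \dots \geq w_n > 0$) gives $\det(\vect{A_{[k]}^T}\vect{P}\vect{A_{[k]}}) \geq w_1\cdots w_k$; on the other side, writing $\vect{A_{[k]}} = \vect{U_k}\vect{U_k^T}\vect{A_{[k]}}$ for an orthonormal basis $\vect{U_k}$ of the column span, Hadamard's inequality (using the unit-norm columns) and the Cauchy interlacing corollary give $\det(\vect{A_{[k]}^T}\vect{P}\vect{A_{[k]}}) \leq p_1\cdots p_k$. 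So the top-$k$ products of the $w$'s are dominated by those of the $p$'s for every $k$, and a Schur-convexity-type lemma (multiplicative majorization implies additive domination) converts this into $\sum_j w_j \leq \sum_i p_i = 1$, contradicting $\sum_j w_j \geq 1+\epsilon$. This determinant-plus-majorization mechanism is the technical core your proposal acknowledges is missing, and without it (or a genuinely different substitute) the proposal does not prove the theorem.
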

This theorem  is an analog of the Koml\'{o}s conjecture for vector
discrepancy. 

Except as a means to lower bound discrepancy, vector discrepancy has
also recently proved itself useful in establishing \emph{efficient} upper
bounds on discrepancy. In a recent breakthrough, Bansal~\cite{nikhil}
showed the following theorem. 
\begin{theorem}[\cite{nikhil}]\label{thm:nikhil}
  Let $\vect{A}$ be a real $m \times n$ matrix and assume that for any
  submatrix $\vect{B}$ of $\vect{A}$ we have $\vecdisc(\vect{B}) \leq
  D$. Then $\disc(\vect{A}) \leq D \cdot K \log m$, and, furthermore,
  there exists a polynomial time randomized algorithm which on input
  $\vect{A}$ outputs $\vect{x} \in \{-1, 1\}^n$ such that, with high
  probability, $\|\vect{Ax}\|_\infty \leq D \cdot K \log m$ for an
  absolute constant $K$.\qed
\end{theorem}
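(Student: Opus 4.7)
The plan is to establish this constructively via a randomized algorithm that performs a guided random walk in the fractional cube $[-1,1]^n$, using the hereditary vector discrepancy hypothesis to choose each step. Starting at the origin, the walk terminates at a vertex of the cube, which provides the desired $\pm 1$ coloring; the claimed polynomial-time algorithm is precisely this walk.

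Maintain $\vect{y}^{(t)} \in [-1,1]^n$ with $\vect{y}^{(0)} = \vect{0}$ and update $\vect{y}^{(t+1)} = \vect{y}^{(t)} + \gamma \vect{w}^{(t)}$ for a small step $\gamma > 0$. Call coordinate $j$ \emph{alive at time $t$} if $|y_j^{(t)}| < 1$, and let $S^{(t)}$ be the set of alive coordinates with $\vect{B}^{(t)}$ the submatrix of $\vect{A}$ on columns $S^{(t)}$. By the hereditary hypothesis $\vecdisc(\vect{B}^{(t)}) \leq D$, so we can compute (via an SDP solver) unit vectors $\{\vect{u_j^{(t)}} : j \in S^{(t)}\}$ satisfying $\|\sum_{j \in S^{(t)}} A_{ij} \vect{u_j^{(t)}}\|_2 \leq D$ for every row $i$. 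Draw a fresh standard Gaussian $\vect{g}^{(t)}$, set $w_j^{(t)} = \langle \vect{g}^{(t)}, \vect{u_j^{(t)}}\rangle$ for $j \in S^{(t)}$ and $w_j^{(t)} = 0$ otherwise, then update $\vect{y}^{(t+1)}$, clipping any coordinate that would exit $[-1,1]$.

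Two observations drive the analysis. First, each alive coordinate performs a Gaussian random walk with per-step variance $\gamma^2$, so a standard hitting-time argument shows that all coordinates freeze at $\pm 1$ within $T = O(\log n / \gamma^2)$ steps with high probability. Second, for every row $i$, the process $M_i^{(t)} = \langle \vect{A_i}, \vect{y}^{(t)}\rangle$ is a martingale whose conditional per-step increment is $\gamma \langle \vect{g}^{(t)}, \sum_j A_{ij}\vect{u_j^{(t)}}\rangle$, a Gaussian with variance at most $\gamma^2 D^2$. To reach the $D \log m$ bound, I would track the exponential potential $\Phi^{(t)} = \sum_{i=1}^m \bigl(e^{\alpha M_i^{(t)}} + e^{-\alpha M_i^{(t)}}\bigr)$ with $\alpha = \Theta(1/D)$: using the Gaussian MGF one obtains $\E[\Phi^{(t+1)} \mid \mathcal{F}^{(t)}] \leq \Phi^{(t)} \cdot e^{\alpha^2 \gamma^2 D^2}$, hence $\E[\Phi^{(T)}] \leq 2m \cdot e^{\alpha^2 T \gamma^2 D^2} = \mathrm{poly}(m)$ once $\gamma$ is chosen sufficiently small. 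Applying Markov's inequality and inverting the exponentials yields $|M_i^{(T)}| \leq K D \log m$ simultaneously for all rows with high probability, which is precisely the desired bound.

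The main obstacle is managing the freezing of coordinates without breaking either SDP feasibility or the martingale structure. When $y_j^{(t)}$ reaches $\pm 1$ we must remove $j$ from every subsequent SDP, and the SDP on the restricted columns must still admit a vector coloring of norm at most $D$---this is exactly where the \emph{hereditary} hypothesis is essential, since a bare bound on $\vecdisc(\vect{A})$ would say nothing about $\vecdisc(\vect{B}^{(t)})$. A secondary issue is that the boundary clipping can, in principle, disturb the martingale property and the per-step variance estimates; standard fixes are either to take $\gamma$ small enough that overshoots happen with negligible probability (absorbed into the failure bound) or to replace the discrete walk by a continuous-time Brownian motion with SDP-guided drift constrained to $[-1,1]^n$. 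A final subtlety is ensuring the SDP can be solved in polynomial time to the required accuracy, which is routine for the Koml\'{o}s-style relaxation but should be noted for the algorithmic half of the claim.
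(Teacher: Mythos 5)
The paper itself offers no proof of this statement---it is quoted from Bansal's work \cite{nikhil} with the proof omitted---and your SDP-guided random walk is exactly the method of that source: solve the vector-discrepancy SDP on the currently alive columns, step along Gaussian combinations of the SDP vectors, and freeze coordinates at $\pm 1$; the hereditary hypothesis is what keeps each restricted SDP feasible with value at most $D$. So the approach is the right one. However, your quantitative analysis has a genuine gap. Since the walk needs $T = \Theta(\log n/\gamma^2)$ steps to freeze everything, the exponent $\alpha^2 T \gamma^2 D^2 = \Theta(\log n)$ is \emph{independent} of $\gamma$, so ``choosing $\gamma$ sufficiently small'' does not make $\E[\Phi^{(T)}]$ polynomial in $m$; it is $2m\cdot n^{\Theta(1)}$. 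Unwinding your potential argument (or doing a direct Azuma-type union bound over rows) therefore gives $\max_i |M_i^{(T)}| = O\bigl(D(\log m + \log n)\bigr)$, or at best $O\bigl(D\sqrt{\log n\,\log m}\bigr)$, which is strictly weaker than the claimed $K D \log m$ when $n$ is superpolynomial in $m$.

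The missing ingredient is a preprocessing step that removes the dependence on $n$: as long as more than $m$ coordinates are alive, the alive column submatrix has a nontrivial kernel, so you can move $\vect{y}$ along a kernel vector (leaving $\vect{A}\vect{y}$ unchanged) until some coordinate hits $\pm 1$, and repeat; after at most $n-m$ such purely linear-algebraic moves only at most $m$ fractional coordinates remain. Starting your Gaussian walk from that point, the freezing time becomes $O(\log m/\gamma^2)$, each row still has conditional per-step variance at most $\gamma^2 D^2$ by the hereditary hypothesis applied to the alive submatrix, and the same exponential-moment argument now yields $\|\vect{A}\vect{x}\|_\infty = O(D\log m)$ with high probability. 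With this reduction, together with the boundary-overshoot and SDP-accuracy issues you already flag (both handled as in \cite{nikhil} by taking $\gamma$ polynomially small and absorbing the error), your sketch does establish the stated theorem.
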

In light of Bansal's result, Theorem~\ref{thm:main} implies that for
any $\vect{A}$ whose columns lie in the unit ball
$\disc(\vect{A}) \leq K\log m$ and that a coloring $\vect{x}$
achieving this bound can be found in randomized polynomial time. Such an efficient
upper bound for the Koml\'{o}s conjecture was proved by Bansal~\cite{nikhil}, and
later using different methods by Lovett and
Meka~\cite{lovettmeka}. However, Bansal's, and Lovett and Meka's upper
bounds are based on the ``partial coloring'' method and a $\log n$
factor seems inherent to upper bounds for the Koml\'{o}s conjecture
derived using this method. On the other hand,
Matou\v{s}ek~\cite{matousek-det} conjectures that the $\log m$ factor
in Theorem~\ref{thm:nikhil} can be improved to $\sqrt{\log m}$. If
this conjecture holds, we would have an alternative, and
\emph{efficient} proof of Banaszczyk's upper bound. We note that
Banaszczyk's proof does not obviously yield an efficient algorithm,
and no polynomial time algorithm that matches his bound is currently known.

To the best of our knowledge, Theorem~\ref{thm:main} establishes the
first constant upper bound on the vector discrepancy of matrices with
bounded column $\ell_2$ norms and on the vector discrepancy of bounded
degree hypergraphs. A weaker bound of $O(\sqrt{\log m})$ can be derived in a
variety of ways: directly from Banaszczyk's upper bound; from the
existence of constant discrepancy partial colorings for the Koml\'{o}s
conjecture; from Matou\v{s}ek's recent upper bound~\cite{matousek-det}
on vector discrepancy in terms of the determinant lower bound of
Lov\'{a}sz, Spencer, and Vesztergombi~\cite{determinant-lb}.  Our bound is
tight, as $\vecdisc((1)) = 1$, for example.

\textbf{Techniques.} Our proof of Theorem~\ref{thm:main} relies on a
dual characterization of vector discrepancy, first used by
Matou\v{s}ek to show that the determinant lower bound on discrepancy
is almost tight~\cite{matousek-det}. However, our result does not
follow directly from Matou\v{s}ek's techniques, which only imply a
bound of $O(\sqrt{\log m})$. Vector discrepancy is equivalent to a
semidefinite programming problem, and, using a variant of the Farkas
lemma for semidefinite programming, we can can formulate a dual
program which is feasible for a parameter $D$ precisely when
$\vecdisc(\vect{A}) \geq D$. We assume that the dual program is
feasible for $D = 1 + \epsilon$. Geometrically, this feasibility can
be formulated as the existence of two ellipsoids $E$ and $F$ such that
$F \subseteq E$ and the sum of squared axes lengths of $E$ is at most
a $D$ factor larger than the sum of squared axes lengths of
$\vect{A}F$. The containment $F \subseteq E$ implies that the largest
$k$-dimensional section of $E$ has volume lowerbounded by the largest
$k$-dimensional section of $F$, for all $k$. Since the columns of
$\vect{A}$ lie inside the unit ball, Hadamard's bound then implies
that the axes lengths of $E$ multiplicatively majorize the axes
lengths of $\vect{A}F$, and, by Schur convexity, we have a contradiction to
the assumed constraints on the axes lengths of $E$ and $\vect{A}F$.

\section{Preliminaries}

In this section we introduce some basic notation and useful linear algebraic
facts. 

\subsection{Notation}

We use boldface to denote  matrices: $\vect{A}$, $\vect{X}$. We
denote the entry in the $i$-th row and $j$-the column of $\vect{A}$ as
$A_{ij}$. We denote by $\range(\vect{A})$ the vector space spanned by
the columns of $\vect{A}$, and by $\ker(\vect{A})$ the kernel
(nullspace) of $\vect{A}$. We'll assume a generic matrix $\vect{A}$
has dimensions $m$ by $n$. By $\|\cdot\|$ we denote the standard
$\ell_2$ norm. 

For a real symmetric matrix $\vect{X}$, we use $\vect{X} \psd 0$ to
denote that $\vect{X}$ is positive semidefinite.\junk{, i.e.
\begin{equation}
  \vect{X} \psd 0\Leftrightarrow \forall \vect{y}: \vect{y^TAy} \geq 0.
\end{equation}}

For a real $m$ by $n$ matrix $\vect{A}$, we define the \emph{discrepancy} of $\vect{A}$ as 
\begin{equation}
  \disc(\vect{A}) = \min_{x \in \{-1, 1\}^n}{\|\vect{Ax}\|_\infty}.
\end{equation}
We define the \emph{vector discrepancy} of $\vect{A}$ as
\begin{equation}
  \vecdisc(\vect{A}) = \min_{\vect{u_1}, \ldots, \vect{u_n} \in S^{n-1}}{\max_{i =
    1}^m{\left\|\sum_{j = 1}^n{A_{ij}\vect{u_j}}\right\|_2}},
\end{equation}
where $S^{n-1}$ is the $(n-1)$-dimensional unit sphere in
$\mathbb{R}^n$.  As noted earlier, $\vecdisc(\vect{A}) \leq
\disc(\vect{A})$ for all $\vect{A}$.



\subsection{Dual Characterization of Vector Discrepancy}

For each matrix $\vect{A}$, $\vecdisc(\vect{A})$ is defined as the
minimum value of a convex function over a convex set, i.e.~as the
value of a convex optimization problem. In particular,
$\vecdisc(\vect{A})^2$ can be written as the optimal solution to the
\emph{semidefinite program}
\begin{align}
  &\min D\label{eq:SDP-beg}\\ 
  &\text{~~~subject to}\\
  &\forall 1\leq i\leq m: (\vect{AXA^T})_{ii} \leq D\\
  &\forall 1\leq i \leq n: X_{ii} = 1\\
  &\vect{X} \psd 0.\label{eq:SDP-end}
\end{align}
To see the equivalence, write the vectors $\vect{u_1}, \ldots,
\vect{u_n}$ forming a vector coloring as the columns of the matrix
$\vect{U}$ and set $\vect{X} = \vect{U^TU} \psd 0$. Also, by the Cholesky
decomposition of positive semidefinite matrices, any $\vect{X} \psd 0$
can be written as $\vect{X} = \vect{U^TU}$ where the columns of
$\vect{U}$ are unit vectors and therefore give a vector coloring. 

Using strong duality for convex programming, we can derive the dual
program to (\ref{eq:SDP-beg})--(\ref{eq:SDP-end}) and characterize the
squared vector discrepancy of $\vect{A}$ as the optimal (maximum)
solution to this dual. A derivation of the dual appears in recent work
by Matou\v{s}ek~\cite{matousek-det}. Next we present the resulting
characterization of vector discrepancy. For a detailed proof of
Theorem~\ref{thm:vecdisc-unif}, see~\cite{matousek-det}.

\begin{theorem}[\cite{matousek-det}]\label{thm:vecdisc-unif}
  For any real $m \times n$ matrix $\vect{A}$, 
  \begin{equation}
    \vecdisc(\vect{A}) \geq D
  \end{equation}
 if and only there exists a distribution $p$ over $[m]$ and
  a vector $\vect{w} \in \mathbb{R}^n$ satisfying
  \begin{equation}\label{eq:cond-unif-w}
    \sum_j{w_j} \geq D^2,
  \end{equation}
  such that for all  $\vect{z} \in \mathbb{R}^n$ 
  \begin{equation}\label{eq:fact-unif}
    \E_{i \sim p}(\sum_{j=1}^n{A_{ij}z_j})^2 \geq \sum_{j=1}^n{w_jz_j^2}.
  \end{equation}
\end{theorem}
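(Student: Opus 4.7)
The plan is to derive the claim as an instance of strong duality for the semidefinite program (\ref{eq:SDP-beg})--(\ref{eq:SDP-end}), whose primal optimum equals $\vecdisc(\vect{A})^2$. I would attach nonnegative multipliers $p_i$ to the constraints $(\vect{AXA^T})_{ii} \leq D$ and free multipliers $w_j$ to the equalities $X_{ii} = 1$, and form the Lagrangian
\[
L(\vect{X}, D; \vect{p}, \vect{w}) = D\left(1 - \sum_i p_i\right) + \vect{X} \matdot \left(\vect{A}^T \operatorname{diag}(\vect{p})\vect{A} + \operatorname{diag}(\vect{w})\right) - \sum_j w_j.
\]
Taking the infimum over $D \in \mathbb{R}$ kills the coefficient of $D$, forcing $\sum_i p_i = 1$, so that $\vect{p}$ is a probability distribution on $[m]$; taking the infimum over $\vect{X} \psd 0$ forces the matrix multiplying $\vect{X}$ to be positive semidefinite, in which case the infimum equals $0$.

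After replacing $\vect{w}$ by $-\vect{w}$, the dual program reads: maximize $\sum_j w_j$ subject to $\vect{p}$ being a distribution on $[m]$ and $\vect{A}^T \operatorname{diag}(\vect{p})\vect{A} \psd \operatorname{diag}(\vect{w})$. Evaluating this matrix inequality on an arbitrary $\vect{z} \in \mathbb{R}^n$ gives exactly
\[
\sum_{i=1}^m p_i\left(\sum_{j=1}^n A_{ij} z_j\right)^2 \geq \sum_{j=1}^n w_j z_j^2,
\]
which is condition (\ref{eq:fact-unif}). Thus the dual constraints are precisely the quantified inequality in the theorem statement, and the dual objective is $\sum_j w_j$.

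The remaining step is to invoke strong duality to conclude that the primal and dual optima agree. This requires verifying Slater's condition, which is immediate: $\vect{X} = \vect{I}$ together with any $D > \max_i (\vect{AA^T})_{ii}$ is strictly feasible for the primal. Hence $\vecdisc(\vect{A})^2$ equals the supremum of $\sum_j w_j$ over dual-feasible pairs, so $\vecdisc(\vect{A}) \geq D$ holds if and only if some dual-feasible $(\vect{p}, \vect{w})$ achieves $\sum_j w_j \geq D^2$, giving the claimed equivalence. I expect the only nontrivial technical point to be verifying Slater's condition (and, as always in SDP duality, keeping the sign conventions for the multipliers straight).
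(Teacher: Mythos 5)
Your proposal is correct and takes exactly the route the paper indicates: the paper itself does not prove Theorem~\ref{thm:vecdisc-unif} but defers to Matou\v{s}ek's SDP strong-duality derivation, and your Lagrangian computation (with $\sum_i p_i = 1$ forced by the free variable $D$, the PSD constraint $\vect{A^T}\operatorname{diag}(\vect{p})\vect{A} \psd \operatorname{diag}(\vect{w})$ from the infimum over $\vect{X} \psd 0$, and Slater's condition via $\vect{X} = \vect{I}$) is precisely that argument. The one point worth stating explicitly is that Slater's condition on the primal gives not just a zero duality gap but \emph{attainment} of the dual optimum, which is what licenses extracting an actual pair $(\vect{p}, \vect{w})$ with $\sum_j w_j \geq D^2$ rather than merely a sequence approaching it.
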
\qed

We note a geometric interpretation of
Theorem~\ref{thm:vecdisc-unif}. Define the ellipsoids  $E(p, \vect{A}) = \{\vect{z}:
\E_{i \sim p}(\sum_{j=1}^n{A_{ij}z_j})^2 \leq 1\}$ and $F(\vect{w}) =
\{\vect{z}: \sum_{j = 1}^n{w_jz_j^2} \leq 1\}$. Then
Theorem~\ref{thm:vecdisc-unif} states that $\vecdisc(\vect{A}) \geq D$
if and only if there exists a distribution $p$ and $\vect{w}$
satisfying (\ref{eq:cond-unif-w}) such that $E(p, \vect{A}) \subseteq F(\vect{w})$.

\subsection{Linear Algebra}

The following two lemmas are essential to our proof. We suspect they
are standard, but include detailed proofs for completeness. The first
lemma states, geometrically, that any $k$-dimensional section of an
ellipsoid $E$ has volume upper bounded by the volume of the section
with the subspace spanned by the $k$ longest axes of $E$. This fact
follows directly from the Cauchy Interlace Theorem. 

\begin{lemma}[Cauchy Interlace Theorem, see e.g.~Chapter 7 of~\cite{meyer}]
  Let $\vect{X} \in \mathbb{R}^{n \times n}$ be a symmetric real
  matrix with eigenvalues $\lambda_1 \geq \ldots \geq \lambda_n$. Let
  also $\vect{U} \in \mathbb{R}^{n \times k}$ be a matrix with
  mutually orthogonal unit columns. Let finally the eigenvalues of
  $\vect{U^TXU}$ be $\mu_1 \geq \ldots \geq \mu_k$. Then, for all $1
  \leq i \leq k$, $\lambda_{n - k + i}\leq \mu_i \leq \lambda_i$. 
\end{lemma}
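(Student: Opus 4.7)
The plan is to prove both inequalities by applying the Courant--Fischer min-max characterization of eigenvalues to $\vect{X}$ and to $\vect{U}^T\vect{X}\vect{U}$, and then exploiting the fact that $\vect{U}$ maps $\mathbb{R}^k$ isometrically onto $\range(\vect{U}) \subseteq \mathbb{R}^n$. Recall that Courant--Fischer states
\[
\lambda_i = \max_{\dim V = i}\; \min_{\substack{v \in V \\ v \neq 0}} \frac{v^T \vect{X} v}{v^T v} = \min_{\dim V = n-i+1}\; \max_{\substack{v \in V \\ v \neq 0}} \frac{v^T \vect{X} v}{v^T v},
\]
where $V$ ranges over subspaces of $\mathbb{R}^n$; the analogous formulas hold for $\mu_i$ with $V$ a subspace of $\mathbb{R}^k$ and $\vect{X}$ replaced by $\vect{U}^T\vect{X}\vect{U}$.

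The key observation is that since $\vect{U}^T\vect{U} = \vect{I}_k$, we have $\|\vect{U}w\|^2 = w^T w$ for every $w \in \mathbb{R}^k$, so
\[
\frac{w^T \vect{U}^T\vect{X}\vect{U}\, w}{w^T w} = \frac{(\vect{U}w)^T \vect{X} (\vect{U}w)}{(\vect{U}w)^T (\vect{U}w)}.
\]
Moreover, $\vect{U}$ is injective, so $W \mapsto \vect{U}(W)$ is a dimension-preserving bijection between subspaces of $\mathbb{R}^k$ and subspaces of $\range(\vect{U})$. Substituting $v = \vect{U}w$ in the Courant--Fischer formulas for $\mu_i$, the extremization over subspaces of $\mathbb{R}^k$ translates into the same extremization over subspaces of $\range(\vect{U})$, applied to exactly the same Rayleigh quotient $v^T \vect{X} v / v^T v$ that governs the $\lambda_j$.

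Both inequalities then follow because the $\mu_i$ extremize over a strictly smaller collection of subspaces than the $\lambda_j$. For the upper bound, $\mu_i$ is a max over $i$-dimensional subspaces of $\range(\vect{U})$ of a quantity whose unrestricted $i$-dimensional max is $\lambda_i$, so $\mu_i \leq \lambda_i$. For the lower bound, the min-max form expresses $\mu_i$ as a minimum over $(k-i+1)$-dimensional subspaces of $\range(\vect{U})$, whereas $\lambda_{n-k+i}$ is the minimum of the same quantity over \emph{all} $(k-i+1)$-dimensional subspaces of $\mathbb{R}^n$ (using $n - (n-k+i) + 1 = k - i + 1$); since a min over a smaller family can only be larger, $\mu_i \geq \lambda_{n-k+i}$. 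No real obstacle is expected; the only thing worth double-checking is the index arithmetic matching $n-k+i$ to the correct dual codimension.
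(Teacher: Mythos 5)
Your proof is correct, and the index arithmetic checks out: with $j = n-k+i$ in the Courant--Fischer dual form, $n - j + 1 = k - i + 1$, which matches the codimension of the subspaces of $\range(\vect{U})$ over which $\mu_i$ is a minimum, so both inequalities follow exactly as you argue. The paper itself offers no proof for this lemma (it is stated with a citation to Chapter 7 of Meyer), so there is nothing to compare against in the source; what you have written is the standard textbook proof via the min-max characterization and the isometry $\vect{U}:\mathbb{R}^k \to \range(\vect{U})$, and it is complete.
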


\begin{cor}
  \label{cor:det-eig}
  Let $\vect{X} \in \mathbb{R}^{n \times n}: \vect{X} \psd 0$ be a
  symmetric real matrix with eigenvalues $\sigma_1 \geq \ldots \geq \sigma_n \geq
  0$. Let also $\vect{U}\in \mathbb{R}^{n \times k}$ be a matrix with
  mutually orthogonal unit columns. Then $\det(\vect{U^T} \vect{X}
  \vect{U}) \leq \sigma_1 \ldots \sigma_k$. 
\end{cor}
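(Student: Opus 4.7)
The plan is to deduce the corollary as a direct consequence of the Cauchy Interlace Theorem, with one small added observation about nonnegativity. Let $\mu_1 \geq \ldots \geq \mu_k$ denote the eigenvalues of $\vect{U^T X U}$. Applying the Cauchy Interlace Theorem with $\lambda_i = \sigma_i$, I would immediately obtain $\mu_i \leq \sigma_i$ for every $1 \leq i \leq k$.

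Next I would observe that $\vect{U^T X U}$ is itself positive semidefinite: for any $\vect{y} \in \mathbb{R}^k$, $\vect{y^T U^T X U y} = (\vect{U y})^T \vect{X} (\vect{U y}) \geq 0$ since $\vect{X} \psd 0$. Consequently every $\mu_i \geq 0$, and of course the $\sigma_i$ are nonnegative by the hypothesis $\vect{X} \psd 0$ as well. With both sequences termwise nonnegative and $\mu_i \leq \sigma_i$, I can multiply the inequalities to conclude
\begin{equation*}
\det(\vect{U^T X U}) \;=\; \prod_{i=1}^{k} \mu_i \;\leq\; \prod_{i=1}^{k} \sigma_i \;=\; \sigma_1 \cdots \sigma_k.
\end{equation*}

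There is essentially no obstacle here; the corollary is an immediate packaging of interlacing together with the stability of positive semidefiniteness under the map $\vect{X} \mapsto \vect{U^T X U}$. The only point requiring a line of justification is the nonnegativity of the $\mu_i$, since without it one cannot multiply the termwise inequalities $\mu_i \leq \sigma_i$ to obtain an inequality on products.
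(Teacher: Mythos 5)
Your proof is correct and takes essentially the same route as the paper, which also presents this corollary as following directly from the Cauchy Interlace Theorem. Your explicit note that $\vect{U^T X U}$ is positive semidefinite, so the $\mu_i$ are nonnegative and the termwise inequalities can be multiplied, is exactly the small detail the paper leaves implicit.
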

\junk{\begin{proof}
  Let $\vect{U_1}$ and $\vect{U_2}$ be two matrices with mutually orthogonal unit
  columns such that $\range(\vect{U_1}) = \range(\vect{U_2})$. We claim that
  $\det(\vect{U_1^TXU_1}) = \det(\vect{U_2^TXU_2})$, and, therefore
  $\det(\vect{U^TXU})$ is entirely determined by $\range(\vect{U})$. Indeed,
  there exists a unitary matrix $\vect{S}$ such that $\vect{U_1S} =
  \vect{U_2}$, and, therefore, $\det(\vect{U_2^TXU_2}) =
  \det(\vect{S^TU_1^TXU_1S}) = \det(\vect{U_1^TXU_1})\det(\vect{S})^2
  = \det(\vect{U_1^TXU_1})$. 

  The lemma is trivially true if $\det(\vect{U^TXU}) = 0$, so we will
  assume that $\vect{U^TXU}$ is non-singular. The proof of the lemma
  proceeds by induction on $k$. In the base case $k=1$, the matrix
  $\vect{U}$ is just a unit vector $\vect{u}$. By the min-max
  characterization of eigenvalues we have that for any unit vector
  $\vect{u}$, $\det(\vect{u^TXu}) = \vect{u^TXu} \leq \sigma_1$. Let
  us assume the lemma holds for $k-1$. Let $\vect{v_1}, \ldots,
  \vect{v_n}$ be the eigenvectors of $\vect{X}$ associated with
  $\sigma_1, \ldots, \sigma_n$. Since $\range(\vect{U})$ is a vector
  space of dimension $k$, we have that $\range(\vect{U}) \cap
  \spanof\{\vect{v_2}, \ldots, \vect{v_n}\}$ is a vector space of
  dimension at least $k-1$. Let $\vect{U_0} \in \mathbb{R}^{n \times
    k-1}$ be a matrix whose columns are an orthonormal basis for some
  $k-1$ dimensional vector space contained in $\range(\vect{U}) \cap
  \spanof\{\vect{v_2}, \ldots, \vect{v_n}\}$. Let $\vect{P}$ be a
  projection matrix for the space $\spanof\{\vect{v_2}, \ldots,
  \vect{v_n}\}$. Since the columns of $\vect{U_0}$ are elements of
  $\spanof\{\vect{v_2}, \ldots, \vect{v_n}\}$, we have that
  $\vect{PU_0} = \vect{U_0}$, and, therefore, $\vect{U_0^TP^TXPU_0} =
  \vect{U_0^TXU^0}$. Also, the top $n-1$ eigenvalues of $\vect{P^TXP}$
  are $\sigma_2,\ldots, \sigma_n$, and, by the induction hypothesis,
  $\det(\vect{U_0^TXU_0}) \leq \sigma_2 \ldots \sigma_n$. Let
  $\vect{u}$ be a unit vector in $\range(\vect{U_0})^\perp \cap
  \range(\vect{U})$ and let $\vect{U_1}$ be the matrix resulting from
  appending $\vect{u}$ as a column of $\vect{U_0}$. Since
  $\range(\vect{U}) = \range(\vect{U_1})$, we have that
  $\det(\vect{U^TXU}) = \det(\vect{U_1^TXU_1})$. A direct calculation
  shows that
  \begin{equation}
    \vect{U_1^TXU_1} = \left(
      \begin{array}{cc}
        \vect{U_0^TXU_0} & \vect{U_0^TXu}\\
        \vect{u^TXU_0} & \vect{u^TXu}
      \end{array}\right)
  \end{equation}
  Therefore, 
  \begin{equation}
    \det(\vect{U_1^TXU_1}) = (\vect{u^TXu})\det(\vect{U_0^TXU_0}  -
    \frac{\vect{U_0^TXuu^TXU_0}}{\vect{u^TXu}}).  
  \end{equation}
  Let $\vect{v} = \frac{1}{\sqrt{\vect{u^TXu}}}\vect{U_0^TXu}$. Since
  $\vect{X}$ is symmetric, the equality above reduces to
  $\det(\vect{U_1^TXU_1}) = (\vect{u^TXu})\det(\vect{U_0^TXU_0} -
  \vect{v^Tv})$. Since we assumed that $\vect{U^TXU}$ is non-singular,
  we know that $\range(\vect{U_0}) \cap \ker(\vect{X}) \subseteq
  \range(\vect{U}) \cap \ker(\vect{X}) = \emptyset$, and, therefore,
  $\vect{U_0^TXU_0}$ is also non-singular. By the matrix determinant
  lemma, $\det(\vect{U_0^TXU_0} - \vect{v^Tv}) =
  \det(\vect{U_0^TXU_0})(1 - \vect{v^T(U_0^TXU_0)^{-1}v})$. Since
  $\vect{U_0^TXU_0} \psd 0$, $(\vect{U_0^TXU_0})^{-1} \psd 0$, and
  therefore $\vect{v^T(U_0^TXU_0)^{-1}v} \geq 0$. So we have that
  $\det(\vect{U_1^TXU_1}) \leq
  (\vect{u^TXu})\det(\vect{U_0^TXU_0})$. By the min-max
  characterization of eigenvalues, $\vect{u^TXu} \leq \sigma_1$ and
  this completes the inductive step.
\end{proof}}

\begin{lemma}
  \label{lm:ineq-det}
  Let $\vect{X} \in \mathbb{R}^{n \times n}: \vect{X} \psd 0$ and 
  $\vect{Y} \in \mathbb{R}^{n \times n}: \vect{Y} \psd 0$ be symmetric
  matrices. Suppose that
  \begin{equation}
    \forall \vect{u} \in \mathbb{R}^n: \vect{u^TXu} \geq \vect{u^TYu}.
  \end{equation}
  Then, $\det(\vect{X}) \geq \det(\vect{Y})$. 
\end{lemma}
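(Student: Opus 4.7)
The hypothesis is exactly that $\vect{X} - \vect{Y} \psd 0$, i.e.\ $\vect{X}$ dominates $\vect{Y}$ in the Loewner order. My plan is to reduce to the elementary fact that a matrix $\vect{M} \psd \vect{I}$ has determinant at least $1$, which is immediate from the variational (min-max) characterization of eigenvalues: every eigenvalue of $\vect{M}$ is bounded below by $1$, so $\det(\vect{M})$, being the product of the eigenvalues, is at least $1$.

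First I would dispose of the degenerate case. If $\vect{Y}$ is singular, then $\det(\vect{Y}) = 0$, while $\vect{X} \psd 0$ gives $\det(\vect{X}) \geq 0$, and we are done.

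Next, in the nondegenerate case where $\vect{Y} \pd 0$, I would take the positive definite square root $\vect{Y}^{1/2}$ (which exists and is invertible) and consider $\vect{M} = \vect{Y}^{-1/2}\vect{X}\vect{Y}^{-1/2}$. Conjugating the inequality $\vect{X} - \vect{Y} \psd 0$ by $\vect{Y}^{-1/2}$ on both sides (a congruence by an invertible matrix preserves positive semidefiniteness, as is easily checked by substituting $\vect{u} = \vect{Y}^{-1/2}\vect{v}$ into the definition of positive semidefiniteness) yields $\vect{M} - \vect{I} \psd 0$, i.e.\ $\vect{M} \psd \vect{I}$. By the eigenvalue argument above, $\det(\vect{M}) \geq 1$, and since $\det(\vect{M}) = \det(\vect{X})/\det(\vect{Y})$ with $\det(\vect{Y}) > 0$, we conclude $\det(\vect{X}) \geq \det(\vect{Y})$.

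There is essentially no obstacle: the only point requiring a moment's care is that the square root $\vect{Y}^{1/2}$ and its inverse exist in the nondegenerate case, which is standard for positive definite symmetric matrices via the spectral theorem. An alternative route, if one prefers to avoid the case split entirely, is to apply the nondegenerate argument to $\vect{Y} + \epsilon \vect{I}$ (which is positive definite and still dominated by $\vect{X} + \epsilon \vect{I}$) and let $\epsilon \to 0^+$, using continuity of $\det$.
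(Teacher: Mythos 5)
Your proof is correct, but it takes a genuinely different route from the paper's. The paper argues geometrically: it associates to each positive semidefinite $\vect{M}$ the ellipsoid $E(\vect{M}) = \{\vect{u}: \vect{u^TMu} \leq 1\}$, observes that the hypothesis gives the containment $E(\vect{X}) \subseteq E(\vect{Y})$, handles the degenerate cases (when one of the determinants vanishes, exactly one of the ellipsoids is unbounded), and then uses the volume formula $\vol(E(\vect{M})) = \vol(B^n)/\sqrt{\det(\vect{M})}$ to translate the volume inequality into the determinant inequality. Your argument is purely algebraic: you reduce by congruence with $\vect{Y}^{-1/2}$ to the normalized statement $\vect{M} \psd \vect{I} \Rightarrow \det(\vect{M}) \geq 1$, which follows from the eigenvalue lower bound $\lambda_i(\vect{M}) \geq 1$. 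Both are standard and both handle the singular case of $\vect{Y}$ by a trivial observation; your alternative $\epsilon$-perturbation remark is a clean way to dispense with the case split. The trade-off is mostly aesthetic in isolation, but in the context of this paper the geometric phrasing is the more natural one: the authors interpret Theorem~\ref{thm:vecdisc-unif} as an ellipsoid containment $E(p,\vect{A}) \subseteq F(\vect{w})$ and the whole argument in the main theorem is phrased around comparing sections and volumes of ellipsoids, so the ellipsoid-volume proof of this lemma fits the narrative, whereas your congruence argument is slightly more elementary and self-contained (it does not even require knowing the volume formula for an ellipsoid).
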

\begin{proof}
  For a symmetric real matrix $\vect{M} \psd 0$, define the ellipsoid
  $E(\vect{M}) = \{\vect{u}: \vect{u^TMu} \leq 1\}$. $E(\vect{M})$ is
  unbounded if and only if $\vect{M}$ is singular. Otherwise,
  \begin{equation}\label{eq:vol-ellipsoid}
    \vol(E(\vect{M})) = \frac{\vol(B^n)}{\sqrt{\det(\vect{M})}},
  \end{equation}
  where $B^n$ is the $n$-dimensional unit ball.

  By assumption, $E(\vect{X}) \subseteq E(\vect{Y})$. If
  $\det(\vect{Y}) = 0$, the lemma is trivially true. If
  $\det(\vect{X}) = 0$, then $E(\vect{X})$ is unbounded and therefore
  $E(\vect{Y})$ is unbounded, which implies $\det(\vect{Y}) = 0$. If,
  on the other hand, $E(\vect{X})$ and $E(\vect{Y})$ are bounded, we
  have that $\vol(E(\vect{X})) \leq \vol(E(\vect{Y}))$, and, by
  (\ref{eq:vol-ellipsoid}), $\det(\vect{X}) \geq \det(\vect{Y})$, as
  desired.
\end{proof}

\section{Proof of  Main Theorem}

We begin with an inequality which can be seen as a converse to the
geometric mean--arithmetic mean inequality. The inequality follows
from the Schur convexity of symmetric convex functions; we present a
self-contained elementary proof using a powering trick.
\begin{lemma}
  \label{lm:prod-sum}
  Let $x_1 \geq \ldots \geq x_n > 0$ and $y_1 \geq \ldots \geq y_n >
  0$ such that
  \begin{equation}\label{eq:prod}
    \forall k \leq n: x_1 \ldots x_k \geq y_1 \ldots y_k
  \end{equation}
  Then,
  \begin{equation}\label{eq:sum}
    \forall k \leq n: x_1 + \ldots + x_k \geq y_1 + \ldots + y_k.
  \end{equation}
\end{lemma}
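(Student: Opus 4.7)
The plan is to reduce the multiplicative hypothesis to an additive one via logarithms, and then deduce the conclusion from the convexity of the exponential function combined with Abel summation.

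First, I fix an arbitrary $k \leq n$ and observe that it suffices to work with the sequences truncated to their first $k$ entries; the hypotheses of the lemma survive such a truncation. Setting $a_i = \log x_i$ and $b_i = \log y_i$, condition (\ref{eq:prod}) becomes the additive \emph{weak majorization}
\begin{equation*}
  S_j \;:=\; \sum_{i=1}^{j}(a_i - b_i) \;\geq\; 0 \qquad \text{for every } j \leq k,
\end{equation*}
with $(a_i)$ and $(b_i)$ still non-increasing (since $\log$ is monotone and all entries are positive). The goal (\ref{eq:sum}) rephrases as $\sum_{i=1}^{k} e^{a_i} \geq \sum_{i=1}^{k} e^{b_i}$.

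Next, I linearize the comparison using convexity of $\exp$. The tangent-line inequality at $b_i$ reads $e^{a_i} - e^{b_i} \geq e^{b_i}(a_i - b_i)$, and summing over $i = 1,\ldots,k$ yields
\begin{equation*}
  \sum_{i=1}^{k} e^{a_i} - \sum_{i=1}^{k} e^{b_i} \;\geq\; \sum_{i=1}^{k} e^{b_i}(a_i - b_i).
\end{equation*}
I then apply Abel summation to the right-hand side with weights $e^{b_i}$:
\begin{equation*}
  \sum_{i=1}^{k} e^{b_i}(a_i - b_i) \;=\; \sum_{i=1}^{k-1}\bigl(e^{b_i} - e^{b_{i+1}}\bigr)\,S_i \;+\; e^{b_k}\,S_k.
\end{equation*}
Every factor in each summand on the right is nonnegative: $e^{b_i} \geq e^{b_{i+1}}$ because the $b_i$ are non-increasing, $S_i \geq 0$ and $S_k \geq 0$ by the logged hypothesis, and $e^{b_k} > 0$ trivially. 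So the right-hand side is nonnegative, and (\ref{eq:sum}) follows.

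The only subtle point is getting the signs in the Abel rearrangement to line up; this works precisely because the weights $e^{b_i}$ inherit the decreasing order of the $b_i$. The ``powering trick'' terminology of the statement most likely refers to the observation that for any $t > 0$ the sequences $(x_i^t)$ and $(y_i^t)$ satisfy exactly the same hypotheses (raising the product inequalities to the $t$-th power preserves them, and $x \mapsto x^t$ is monotone on the positive reals), so the same argument in fact proves the stronger inequality $\sum_{i=1}^{k} x_i^t \geq \sum_{i=1}^{k} y_i^t$ for every $t > 0$, with $t = 1$ recovering the lemma.
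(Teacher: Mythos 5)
Your proof is correct, but it takes a genuinely different route from the paper's. You take logarithms to convert the multiplicative hypothesis $\prod_{i\leq j} x_i \geq \prod_{i\leq j} y_i$ into the additive statement $S_j = \sum_{i\leq j}(a_i - b_i) \geq 0$, then exploit the convexity of $\exp$ via its tangent-line bound at each $b_i$ and finish with Abel summation, using that the weights $e^{b_i}$ are non-increasing. This is essentially the classical Hardy--Littlewood--P\'olya/Karamata mechanism for passing from weak (log-)majorization to a sum inequality under a convex map, specialized to $\exp$; it is clean, local (one fixed $k$ at a time), and, as you observe, immediately yields the stronger $\sum_{i\leq k} x_i^t \geq \sum_{i\leq k} y_i^t$ for all $t>0$. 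The paper instead deliberately sidesteps this majorization machinery: it raises the entire sum $x_1+\cdots+x_n$ to the $L$-th power, expands by the multinomial theorem, compares monomials term by term using (\ref{eq:prod}) (only for sorted exponent tuples, which is why the $n!$ overcounting factor appears), and then takes $L\to\infty$ to kill the $(n!)^{1/L}$ loss. The paper's argument stays entirely in the realm of polynomial identities and a limit, avoiding logarithms and convexity; yours is shorter and more transparent once one is comfortable with summation by parts. One small correction to your closing remark: the paper's ``powering trick'' refers to raising the \emph{sum} to the $L$-th power and passing to the limit, not to replacing $x_i$ by $x_i^t$; your reinterpretation is a nice observation but not what the authors meant.
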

\begin{proof}
  We will show that for all positive integers $L$, $(x_1 + \ldots +
  x_n)^L \geq \frac{1}{n!} (y_1 + \ldots + y_n)^L$. Taking $L$-th
  roots, we get that $x_1 + \ldots +  x_n \geq \frac{1}{(n!)^{1/L}}
  (y_1 + \ldots + y_n)$. Letting $L \rightarrow \infty$ and taking
  limits yields the desired result.

  By the multinomial theorem,
  \begin{equation}
    (x_1 + \ldots +  x_n)^L = \sum_{i_1 + \ldots + i_n = L}{
      \frac{L!}{i_1! \ldots i_n!} x_1^{i_1}\ldots x_n^{i_n}}. 
  \end{equation}
  The inequalities (\ref{eq:prod}) imply that whenever $i_1 \geq
  \ldots \geq i_n$, $x_1^{i_1}\ldots x_n^{i_n} \geq y_1^{i_1}\ldots
  y_n^{i_n}$. Therefore,
  \begin{equation}\label{eq:sumpower-lb}
    (x_1 + \ldots +  x_n)^L \geq \sum_{\substack{i_1 \geq\ldots \geq
        i_n\\i_1 + \ldots + i_n = L}}{
      \frac{L!}{i_1! \ldots i_n!} y_1^{i_1}\ldots y_n^{i_n}}. 
  \end{equation}

  Given a sequence $i_1, \ldots, i_n$, let $\sigma$ be a permutation
  on $n$ elements such that $i_{\sigma(1)} \geq \ldots \geq
  i_{\sigma(n)}$. Since $y_1 \geq \ldots \geq y_n$, we have that
  $y_1^{i_{\sigma(1)}} \ldots y_n^{i_{\sigma(n)}} \geq y_1^{i_1}
  \ldots y_n^{i_n}$. Furthermore, there
  are at most $n!$ distinct permutations of $i_1, \ldots, i_n$ (the
  bound is achieved exactly when all $i_1, \ldots, i_n$ are
  distinct). These observations and the multinomial theorem imply that
  \begin{equation}\label{eq:sumpower-ub}
    (y_1 + \ldots + y_n)^L \leq \sum_{\substack{i_1 \geq\ldots \geq
        i_n\\i_1 + \ldots + i_n = L}}{ \frac{n!L!}{i_1! \ldots i_n!}
      y_1^{i_1}\ldots y_n^{i_n}}. 
  \end{equation}
  Inequalities (\ref{eq:sumpower-lb}) and (\ref{eq:sumpower-ub})
  together imply $(x_1 + \ldots + x_n)^L \geq \frac{1}{n!} (y_1 +
  \ldots + y_n)^L$ as desired.
\end{proof}

We are now ready to prove our main result. 
\begin{theorem}[Theorem~\ref{thm:main} restated]
  For any matrix $\vect{A} \in \mathbb{R}^{m \times n}$ such that
  $\forall i \in [n]: \|\vect{A_{*i}}\| \leq 1$, $\vecdisc(\vect{A})
  \leq 1$. 
\end{theorem}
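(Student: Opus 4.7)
\smallskip
\noindent\textbf{Proof sketch.} The plan is to argue by contradiction using the dual characterization in Theorem~\ref{thm:vecdisc-unif}. I suppose $\vecdisc(\vect A) \geq D$ for some $D > 1$; the dual then supplies a probability distribution $\vect p$ on $[m]$ and a vector $\vect w \in \mathbb{R}^n$ with $\sum_j w_j \geq D^2$ such that $\vect M := \vect A^T \mathrm{diag}(\vect p)\vect A \psd \mathrm{diag}(\vect w) =: \vect W$. After a standard perturbation argument I may assume $w_j > 0$ for every $j$, and I relabel the columns of $\vect A$ so that $w_1 \geq w_2 \geq \cdots \geq w_n > 0$.

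I then extract the column-norm hypothesis through the diagonal of $\vect M$: since $\vect p$ is a distribution and $\|\vect a_j\| \leq 1$,
\[
M_{jj} \;=\; \sum_{i=1}^m p_i A_{ij}^2 \;\leq\; \sum_{i=1}^m A_{ij}^2 \;=\; \|\vect a_j\|^2 \;\leq\; 1.
\]
For every $k \leq n$ I set $S = \{1,\ldots,k\}$. Restricting the matrix inequality $\vect M \psd \vect W$ to coordinates in $S$ reads $\vect M|_S \psd \vect W|_S$, so Lemma~\ref{lm:ineq-det} gives $\det(\vect M|_S) \geq \prod_{i\leq k} w_i$, while Hadamard's inequality for PSD matrices combined with the diagonal bound gives $\det(\vect M|_S) \leq \prod_{i\leq k} M_{ii} \leq \prod_{i\leq k}\|\vect a_i\|^2 \leq 1$. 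Chaining these yields the multiplicative majorization
\[
\prod_{i=1}^k w_i \;\leq\; \prod_{i=1}^k \|\vect a_i\|^2 \;\leq\; 1, \qquad k=1,\ldots,n.
\]

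The remaining step is to convert this multiplicative statement into the additive contradiction $\sum_j w_j > 1$. Lemma~\ref{lm:prod-sum} is exactly the tool that promotes a multiplicative majorization to an additive one for partial sums, so the question becomes: which envelope $(x_i)$ dominating the products should I feed into Lemma~\ref{lm:prod-sum} to obtain a constant bound on $\sum_j w_j$? This is the main obstacle: the crude choice $x_i \equiv 1$ yields only $\sum_j w_j \leq n$, which is far too weak. The refinement, suggested by the ellipsoidal ``sum of squared axes lengths'' picture of the introduction, should sharpen the Hadamard step by replacing the bound on individual principal minors of $\vect M$ with Corollary~\ref{cor:det-eig} applied to the full singular-value profile of $\mathrm{diag}(\vect p)^{1/2}\vect A$, and by tracking the column vectors themselves rather than just their norms (so that the probability mass of $\vect p$ is fully absorbed). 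The multiplicative majorization that results, fed into Lemma~\ref{lm:prod-sum} together with the constraint $\sum_j w_j \geq D^2$, should then contradict $D > 1$.
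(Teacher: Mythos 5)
Your scaffolding coincides with the paper's: the dual characterization, the reduction to $w_j>0$, sorting $w$, restricting the matrix inequality to the leading $k\times k$ principal block, Lemma~\ref{lm:ineq-det} for the lower bound $\det \geq w_1\cdots w_k$, and Lemma~\ref{lm:prod-sum} to pass from products to sums. But the decisive step is exactly the one you leave open, so there is a genuine gap. Your Hadamard bound $\det(\vect{M}|_S) \leq \prod_{i\leq k} M_{ii} \leq 1$ is, as you yourself note, too weak (it yields only $\sum_j w_j \leq n$). The paper closes the argument by proving the sharper bound
\begin{equation*}
  \det\bigl(\vect{A_{[k]}^T}\vect{P}\vect{A_{[k]}}\bigr) \;\leq\; p_1\cdots p_k ,
\end{equation*}
where $p_1\geq \cdots \geq p_m$ are the sorted probabilities and $\vect{A_{[k]}}$ consists of the first $k$ columns. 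The trick: let $\vect{U_k}$ be an orthonormal basis of $\range(\vect{A_{[k]}})$, so $\vect{A_{[k]}}=\vect{U_k}\vect{U_k^T}\vect{A_{[k]}}$ and hence $\det(\vect{A_{[k]}^T}\vect{P}\vect{A_{[k]}}) = \det(\vect{U_k^T}\vect{A_{[k]}})^2\,\det(\vect{U_k^T}\vect{P}\vect{U_k})$; Hadamard applies to the $k\times k$ matrix $\vect{U_k^T}\vect{A_{[k]}}$ (its columns still have norm at most $1$), and Corollary~\ref{cor:det-eig} applies to $\vect{U_k^T}\vect{P}\vect{U_k}$ because $\vect{U_k}$ has orthonormal columns, giving $p_1\cdots p_k$. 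Thus $p_1\cdots p_k \geq w_1\cdots w_k$ for every $k$, and Lemma~\ref{lm:prod-sum} with $x=(p_1,\ldots,p_n)$ and $y=(w_1,\ldots,w_n)$ gives $\sum_j w_j \leq \sum_{j\leq n} p_j \leq 1$, contradicting $\sum_j w_j \geq D^2 > 1$. In other words, the envelope you were searching for is the sorted probability vector itself, not the constant-one vector.

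Your proposed repair, applying Corollary~\ref{cor:det-eig} to the full singular-value profile of $\mathrm{diag}(\vect{p})^{1/2}\vect{A}$, does not work as stated: that corollary bounds sections $\vect{U^T}\vect{X}\vect{U}$ taken along \emph{orthonormal} columns, and the columns of $\vect{A_{[k]}}$ are not orthonormal. The whole content of the missing step is to decouple the two effects: the non-orthonormality of the columns is absorbed by Hadamard at a cost of a factor $\det(\vect{U_k^T}\vect{A_{[k]}})^2 \leq 1$, while the interlacing corollary is applied to $\vect{P}$ through a genuinely orthonormal frame for $\range(\vect{A_{[k]}})$, which is what replaces the lossy bound $1$ by $p_1\cdots p_k$. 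So your proposal correctly diagnoses where the crude bound fails and points in roughly the right direction, but the idea that makes the argument close is not carried out.
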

\begin{proof}
  We will use Theorem~\ref{thm:vecdisc-unif} with $D =
  \sqrt{1+\epsilon}$ for an arbitrary $\epsilon > 0$. For any
  $\vect{w}\in \mathbb{R}^n$ satisfying $\sum_{i = 1}^n{w_i} \geq
  1+\epsilon$ we will show there exists a $\vect{z} \in \mathbb{R}^n$
  satisfying
  \begin{equation}\label{eq:fact-komlos}
    E_{j \sim p}(\sum_{i = 1}^n{A_{ij}z_i})^2 < \sum_i{w_iz_i^2}.    
  \end{equation}
  Therefore, by Theorem~\ref{thm:vecdisc-unif}, $\vecdisc(A)^2 < 1+
  \epsilon$ for all $\epsilon > 0$, which proves our main theorem. 

  For any $i: w_i \leq 0$, we can set $z_i = 0$. Then $\sum_{i: w_i >
    0}{w_i} \geq \sum_i{w_i} \geq 1+\epsilon$. Consider also the
  submatrix $\vect{A'}$ consisting of those columns $\vect{A_{*i}}$ of
  $\vect{A}$ for which $w_i \geq 0$. The matrix $\vect{A'}$ satisfies
  the assumption that all its columns have norm bounded by
  1. Therefore, it is sufficient to show that for any matrix
  $\vect{A}$ with columns bounded by 1 in the euclidean norm, any
  $\vect{w}$ such that $\forall i: w_i > 0$ and $\sum_{i = 1}^n{w_i}
  \geq 1+\epsilon$, and any distribution $p$ on $[m]$, there exists a
  $\vect{z}$ satisfying the bound (\ref{eq:fact-komlos}).
  
  We denote by $\vect{W}$ the diagonal matrix with $\vect{w}$ on the
  diagonal, and similarly for any distribution $\vect{p} \in
  \mathbb{R}_+^m: \sum_{j =1}^m{p_j} = 1$ we denote by $\vect{P}$ the
  diagonal matrix with $\vect{p}$ on the diagonal. In this matrix
  notation, we need to show that for any positive definite diagonal
  matrix $\vect{W}$ such that $\trace(\vect{W}) \geq 1 + \epsilon$,
  and any positive semidefinite diagonal matrix $\vect{P}$ such that
  $\trace(\vect{P}) = 1$, there exists a vector $\vect{z} \in
  \mathbb{R}^n$ such that
  $\vect{z^T}\vect{A^T}\vect{P}\vect{A}\vect{z} <
  \vect{z}\vect{W}\vect{z}$.

  Assume for contradiction that 
  \begin{equation}\label{eq:contr-assump}
    \forall \vect{z}: \vect{z^T}\vect{A^T}\vect{P}\vect{A}\vect{z}
    \geq \vect{z}\vect{W}\vect{z}. 
  \end{equation}
  Geometrically, this is equivalent to $E(p, \vect{A}) \subseteq
  F(\vect{w})$, where $F$ and $E$ are defined as before. The outline
  of our proof is as follows. The relation $F(p, \vect{A}) \subseteq
  E(\vect{w})$ implies that, for all $k$, the largest $k$-dimensional section of $F$
  has volume lower bounded by the volume of the largest
  $k$-dimensional section of $E$. Using Corollary~\ref{cor:det-eig} and the
  Hadamard bound we can show that this implies that, for all $k$, the
  product of the $k$ largest $p_i$ is lower bounded by the product of
  the $k$ largest $w_j$. Then, Lemma~\ref{lm:prod-sum} implies that
  the sum of all $p_i$ is lower bounded by the sum of all $w_j$, which
  is a contradiction. We proceed to prove the above claims formally. 

  Let, without loss of generality, $w_1 \geq \ldots \geq w_n > 0$ and
  similarly $p_1 \geq \ldots \geq p_m \geq 0$. Denote by
  $\vect{A_{[k]}}$ the matrix $(\vect{A_{*1}}, \ldots, \vect{A_{*k}})$
  and by $\vect{W_{k}}$ the diagonal matrix with $w_1, \ldots, w_k$ on
  the diagonal. We first show that
  \begin{equation}
    \label{eq:eig-lb}
    \forall k \leq n: \det(\vect{A_{[k]}^T}\vect{P}\vect{A_{[k]}}) \leq
    p_1\ldots p_k.
  \end{equation}
  Let $\vect{u_1}, \ldots \vect{u_k}$ be an orthonormal basis for the
  range of $\vect{A_{[k]}}$ and let $\vect{U_k}$ be the matrix
  $(\vect{u_1}, \ldots \vect{u_k})$. Then $\vect{A_{[k]}} =
  \vect{U_k}\vect{U_k^T}\vect{A_{[k]}}$. Each column of the square
  matrix $\vect{U_k^T}\vect{A_{[k]}}$ has norm at most $1$, and, by
  Hadamard's inequality, 
  \begin{equation}
    \det(\vect{A^T_{[k]}U_k}) =  \det(\vect{U_k^T}\vect{A_{[k]}}) \leq 1. 
  \end{equation}
  Therefore, 
  \begin{equation}
    \forall k \leq n: \det(\vect{A_{[k]}^T}\vect{P}\vect{A_{[k]}}) \leq
    \det(\vect{U_k^T}\vect{P}\vect{U_k}).
  \end{equation}
  By Corollary~\ref{cor:det-eig}, we have that
  $\det(\vect{U_k^T}\vect{P}\vect{U_k}) \leq p_1 \ldots p_k$, which
  proves (\ref{eq:eig-lb}). 

  By (\ref{eq:contr-assump}) we know that for all $k$ and for all
  $\vect{u} \in \mathbb{R}^k$, $\vect{u^TA_{[k]}^TPA_{[k]}u} \geq
  \vect{u^TW_ku}$, since we can freely choose $\vect{z}$ such that $z_i
  = 0$ for all $i>k$. Then, by Lemma~\ref{lm:ineq-det}, we have that
  \begin{equation}
    \label{eq:eig-ub}
    \forall k\leq n: \det(\vect{A_{[k]}^T}\vect{P}\vect{A_{[k]}}) \geq
    \det(\vect{W_k}) = w_1\ldots w_k
  \end{equation}
  Combining (\ref{eq:eig-lb}) and (\ref{eq:eig-ub}), we have that 
  \begin{equation}
    \label{eq:p_k-w_k}
    \forall k \leq n: p_1 \ldots p_k \geq w_1 \ldots w_k
  \end{equation}
  By Lemma~\ref{lm:prod-sum}, (\ref{eq:p_k-w_k}) implies that $1 = \sum_{j = 1}^m{p_j} \geq
  \sum_{j = 1}^n{p_j} \geq \sum_{i = 1}^n{w_i} \geq 1+ \epsilon$, a
  contradiction. 
  
\end{proof}

\section{Conclusion}

We have shown that the vector discrepancy of a matrix $\vect{A}$ all
of whose columns are contained in the unit ball is bounded by $1$
from above. This result establishes a natural vector discrepancy
variant of the notorious Koml\'{o}s and Beck-Fiala conjectures. On one
hand our result can be seen as evidence in support of the conjectures:
they cannot be disproved by lower bounding vector discrepancy. On the
other hand, our work opens the possibility of giving an efficient
proof of Banaszczyk's bound of $O(\sqrt{\log m})$ on $\disc(\vect{A})$
by improving the pseudoapproximation algorithm of
Bansal~\cite{nikhil}. We hope that our result would prove
useful in an attack on the Koml\'{o}s conjecture itself.

\section*{Acknowledgements}

I would like to thank Nikhil Bansal, Kunal Talwar, Daniel Dadush, and
S.~Muthukrishnan for useful discussions of the result and the
writeup. 

\bibliographystyle{plain}	
\bibliography{disc2}

\end{document}